\theoremstyle{plain}
\newtheorem{theorem}{Theorem}[section]
\newtheorem{lemma}[theorem]{Lemma}
\newtheorem{corollary}[theorem]{Corollary}
\theoremstyle{definition}
\newtheorem{claim}[theorem]{Claim}
\begin{document}

\begin{center}
{\Large \bf Resolving Open Problems on the Euler Sombor Index}

 \vspace{8mm}

 {\bf Kinkar Chandra Das, Jayanta Bera\footnote{Corresponding author.}}

 \vspace{9mm}

 \baselineskip=0.20in

 {\it Department of Mathematics, Sungkyunkwan University, \\
 Suwon 16419, Republic of Korea\/} \\[2mm]
 E-mail: {\tt  kinkardas2003@gmail.com,~jayantabera@g.skku.edu}

\vspace{4mm}

 (Received July 23, 2025)
 \end{center}

 \vspace{5mm}

 \baselineskip=0.20in

\begin{abstract}
Recently, the Euler Sombor index $(EUS)$ was introduced as a novel degree-based topological index. For a graph $G$, the Euler Sombor index is defined as
$$EUS(G) = \sum_{v_i v_j \in E(G)} \sqrt{d_i^2 + d_j^2 + d_i d_j},$$
where $d_i$ and $d_j$ denote the degrees of the vertices $v_i$ and $v_j$, respectively. Very recently, Khanra and Das \textbf{\bf [Euler Sombor index of trees, unicyclic and chemical graphs, \emph{MATCH Commun. Math. Comput. Chem.} \textbf{94} (2025) 525--548]} proposed several open problems concerning the Euler Sombor index. This paper completely resolves two of the most challenging problems posed therein. First, we determine the minimum value of the $EUS$ index among all unicyclic graphs of a fixed order and prescribed girth, and we characterize the extremal graphs that attain this minimum. Building on this result, we further establish the minimum $EUS$ index within the broader class of connected graphs of the same order and girth, and identify the corresponding extremal structures.
In addition, we classify all connected graphs that attain the maximum Euler Sombor index $(EUS)$ when both the order and the number of leaves are fixed.
 
\bigskip

 \noindent
\end{abstract}
\baselineskip=0.27in

\section{Introduction}

Let $G=(V,\,E)$ be a simple graph with vertex set $V(G) = \{v_1,\,v_2, \ldots,\,v_n\}$ and edge set $E(G)$, where $|V(G)|=n$ and $|E(G)|=m$. For any vertex $v_i \in V(G)$, we denote its neighborhood by $N_G(v_i) = \{v_k \in V(G) : v_i v_k \in E(G)\}$ and its degree by $d_i = |N_G(v_i)|$. Two vertices $v_i$ and $v_j$ that are not adjacent in $G$ can be joined by an edge to obtain a new graph, denoted by $G + v_i v_j$. The standard notations used in this article are as follows: $C_n$, $P_n$, and $K_n$ denote the cycle, path, and complete graph of order $n$, respectively. The \emph{girth} of a graph refers to the length of its shortest cycle. A vertex of degree one is called a \emph{pendant vertex}, and an edge incident to a pendant vertex is called a \emph{pendant edge}. A path $P = v_1 v_2 \ldots v_k$ is said to be a \emph{pendant path} if it is an induced sub-path of $G$ such that $d_1 = 1$, $d_2 = \cdots = d_{k-1} = 2$, and $d_k \geq 3$.

Chemical graph theory is an important branch of mathematical chemistry that uses graph theory to represent and study molecular structures. In this approach, molecules are viewed as graphs, where the vertices represent atoms, and the edges represent chemical bonds. This representation provides a strong mathematical framework for analyzing the structure and properties of molecules.

Among the various tools in this field, degree-based topological indices are widely used as numerical measures of molecular structure~\cite{gut04,gutman13degree,ali18sum,borovicanin17zagreb}. These indices are based on the degree of vertices, where the degree of a vertex indicates the number of bonds connected to the corresponding atom, reflecting its local connectivity. These indices effectively capture key structural features of molecules and have demonstrated strong correlations with various physical, chemical, and biological properties. The computational efficiency and predictive power of degree-based indices make them essential tools for studying quantitative structure-property relationships (QSPR), which help to design and analyze novel chemical compounds.

The Sombor index, introduced by Gutman \cite{gutman21geo}, is one of the most prominent degree-based topological indices. For a graph $G$, the Sombor index ($SO$) is defined as
\begin{eqnarray*}
SO(G) = \sum_{v_i v_j \in E(G)} \sqrt{d_i^2 + d_j^2},
\end{eqnarray*}
where $d_j$ denotes the degree of vertex $v_j$ in $G$. This index has attracted significant attention from researchers due to its mathematical complexity and chemical significance. Its mathematical properties and chemical applications have been extensively explored and continue to be an active area of research~\cite{nithyaa24,CAM3,chen24,CAM4,cruz21unibicy,cruz21trees,das21symmetry,das22trees,das21extremal,deng21moleculartrees,gutman21basic,gutman21geo,gutman24eu,liu22review,liu22tetracyclic,cruz21chemical,rather22sombor,horoldagvaa21sombor,zhang23sombor,das24open}.

Building on this foundation, Gutman et al.~\cite{gutman24elliptic} proposed a new degree-based topological index with geometric motivation, known as the elliptic Sombor index ($ESO$). It is defined as
\[
\operatorname{ESO}(G) = \sum_{v_i v_j \in E(G)} (d_i + d_j) \sqrt{d_i^2 + d_j^2}.
\]
The index has been studied for its mathematical properties and potential applications in chemical graph theory~\cite{espinal25elliptic,gutman24elliptic,shanmukha24elliptic,rada24benzenoid,tang24elliptic,ahmad25elliptic}.

Afterwards, another geometrically inspired index, called the Euler Sombor index ($EUS$), was proposed in~\cite{gutman24eu,tang24euler}, offering an alternative perspective on degree-based graph invariants. The Euler Sombor index ($EUS$) of a graph $G$ is defined as
\begin{eqnarray*}
EUS(G) = \sum_{v_i v_j \in E(G)} \sqrt{d_i^2 + d_j^2 + d_i d_j}.
\end{eqnarray*}

Recently, considerable research has been conducted on the extremal graph problem related to the $EUS$ index. For example, Khanra and Das~\cite{khanra25euler} described the extremal trees with respect to the $EUS$ index. Ren et al.~\cite{ren25eulertrees} presented a characterization of trees that maximize the $EUS$ index among all trees with a specified number of pendant vertices. Su and Tang~\cite{su25euler} classified the extremal unicyclic and bicyclic graphs for the $EUS$ index. Kizilirmak~\cite{kizilirmak25unicyclic} investigated unicyclic graphs with the lowest $EUS$ index, taking into account both the order and diameter of the graphs. For more recent results on the $EUS$ index, see~\cite{tang24euler,kizilirmak25euler,albalahi25tricyclic,tache25unicyclic}.

In~\cite{khanra25euler}, Khanra and Das posed the following three open problems concerning the $EUS$ index:
\begin{itemize}
    \item \textbf{Problem 1:} Find the extremal values and describe the extremal graphs for the $EUS$ index among all connected graphs of fixed order and given girth.
    \item \textbf{Problem 2:} Determine the extremal values and characterize the extremal graphs for the $EUS$ index among all connected graphs of fixed order and a given number of pendant vertices.
    \item \textbf{Problem 3:} Investigate the extremal values of the $EUS$ index for chemical unicyclic graphs and identify those that attain these values.
\end{itemize}

In \cite{khanra25euler}, Khanra and Das proved that among all unicyclic graphs, the cycle graph uniquely attains the minimum value of the Euler Sombor index. We restate their result below:

\begin{theorem}
Among all unicyclic graphs with $n \geq 3$ vertices, the unique graph that achieves the minimum Euler Sombor index is the cycle graph $C_n$. Moreover, the minimum value of the Euler Sombor index is $2\sqrt{3}\,n$.
\end{theorem}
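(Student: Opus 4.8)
The plan is to bound the index edgewise from below by a quantity depending only on the degree sequence, and then exploit the fact that a unicyclic graph on $n$ vertices has exactly $n$ edges, hence degree sum $2n$. The starting point is the elementary pointwise inequality
\[
\sqrt{x^2 + y^2 + xy} \;\geq\; \frac{\sqrt{3}}{2}\,(x+y) \qquad (x,y > 0),
\]
which I would verify by squaring: the difference $x^2+y^2+xy - \tfrac34(x+y)^2$ equals $\tfrac14(x-y)^2 \ge 0$, so equality holds precisely when $x=y$. Applying this to every edge $v_iv_j$ with $x=d_i$, $y=d_j$ gives
\[
EUS(G) \;=\; \sum_{v_iv_j\in E(G)} \sqrt{d_i^2+d_j^2+d_id_j}
\;\ge\; \frac{\sqrt3}{2}\sum_{v_iv_j\in E(G)}(d_i+d_j)
\;=\; \frac{\sqrt3}{2}\sum_{i=1}^{n} d_i^2,
\]
where the last identity is the standard rewriting of the first Zagreb index, each vertex $v_i$ contributing $d_i$ to each of its $d_i$ incident edges.

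Next I would apply the Cauchy--Schwarz (equivalently, power-mean) inequality to the degree sequence: since $\sum_{i} d_i = 2m = 2n$ for a unicyclic graph,
\[
\sum_{i=1}^{n} d_i^2 \;\ge\; \frac{1}{n}\Big(\sum_{i=1}^{n} d_i\Big)^2 \;=\; \frac{(2n)^2}{n} \;=\; 4n.
\]
Combining the two displayed bounds yields $EUS(G) \ge \frac{\sqrt3}{2}\cdot 4n = 2\sqrt3\,n$, which matches the value $EUS(C_n)=2\sqrt3\,n$ obtained directly, since every vertex of $C_n$ has degree $2$ and each of its $n$ edges contributes $\sqrt{4+4+4}=2\sqrt3$.

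It remains to settle uniqueness, which I regard as the only delicate point. Equality in the first bound forces $d_i = d_j$ on every edge; since $G$ is connected this propagates to make all degrees equal, and equality in Cauchy--Schwarz likewise forces a constant degree sequence. Either condition, together with degree sum $2n$, pins every degree to $2$, and the unique connected $2$-regular graph on $n$ vertices is $C_n$. The main thing to be careful about is that the two equality analyses are mutually consistent and that no boundary case (for instance, an edge joining two pendant vertices) slips through; but connectivity for $n\ge 3$ rules out such $K_2$-type components, so the chain of inequalities is tight exactly at $C_n$.
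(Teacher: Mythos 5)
Your proof is correct, but note that the paper itself does not prove this statement: it is only restated as a known result of Khanra and Das \cite{khanra25euler}, so there is no internal proof to compare against. Your route --- the pointwise bound $\sqrt{x^2+y^2+xy}\ge\frac{\sqrt{3}}{2}\,(x+y)$ with equality iff $x=y$, the Zagreb-type identity $\sum_{v_iv_j\in E(G)}(d_i+d_j)=\sum_i d_i^2$, and Cauchy--Schwarz together with $\sum_i d_i=2m=2n$ for a (connected) unicyclic graph --- is a clean, self-contained argument, and your equality analysis is sound: equality in the full chain forces equality at each step, and either step (constant degree along edges plus connectivity, or equality in Cauchy--Schwarz) pins all degrees to $2$, whence $G\cong C_n$. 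It is worth contrasting this with the edge-partition technique the paper uses for its own fixed-girth results (Corollary \ref{v0} and Theorem \ref{1p1}): there the authors split $E(G)$ into pendant-path edges and the rest, and apply case-dependent bounds such as $\sqrt{d_i^2+d_j^2+d_id_j}\ge\sqrt{12}$ when $d_i,d_j\ge 2$ and $\ge\sqrt{19}$ when $d_i\ge 3$, $d_j\ge 2$. A naive per-edge bound of $2\sqrt{3}$ cannot prove the present theorem, since a pendant edge may contribute only $\sqrt{7}<2\sqrt{3}$; your global degree-sum accounting absorbs this deficit automatically, which is exactly what makes the argument short. The trade-off is that your method sees only the degree sum, so it cannot be sharpened to distinguish girth or other structural parameters --- which is why the paper's finer results require the more granular case analysis.
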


From the above theorem, it follows that the cycle graph $C_n$ uniquely minimizes the Euler Sombor index among all chemical unicyclic graphs. This result provides a complete solution to Open Problem 3 from \cite{khanra25euler} concerning the minimal graph. However, the corresponding problem for the maximal case remains unresolved.

In this paper, we advance the study of the Euler Sombor index by identifying the connected graphs of a fixed order and prescribed girth that minimize the index. Furthermore, we classify the extremal graphs that attain the maximum Euler Sombor index among all connected graphs of a given order with a specified number of pendant (leaf) vertices.

\section{Main Results}
 We now compute the Euler Sombor index $(EUS)$ of $H_1$ (see, Fig. \ref{H1}).
\begin{lemma} \label{s1} Let $H_1$ be a unicyclic graph of order $n$ with girth $g\,(\leq n-2)$ and maximum degree $k+\ell+2$, where $k\,(\geq 0)$ is the number of pendant paths of length $1$ and $\ell\,(\geq 0)$ is the number of pendant paths of length at least $2$ (see, Fig. \ref{H1}). Then 
\begin{align*}
EUS(H_1)&=(\ell+2)\,\sqrt{(k+\ell+2)^2+2\,(k+\ell+2)+4}+\ell\,\sqrt{7}\\
&~~~~+k\,\sqrt{(k+\ell+2)^2+(k+\ell+2)+1}+(n-k-2\,\ell-2)\,\sqrt{12}.
\end{align*}

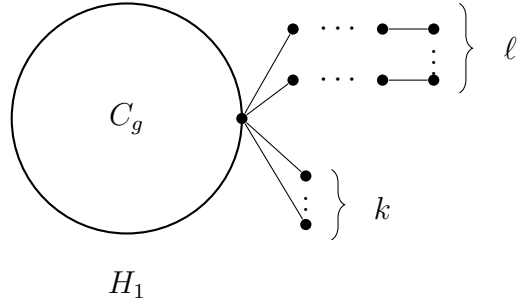
\begin{figure}[H]
\centering
\begin{tikzpicture}[scale=1.7]

\tikzset{graphnode/.style={circle, fill=black, inner sep=1.5pt}}

\draw[thick] (0,0) circle (0.9);
\node at (0,0) {\(C_g\)}; 
\node[graphnode] (c) at (0.9,0) {};

\node[graphnode] (l1a) at (1.3,0.3) {};  
\node at (1.65,0.3) {\(\cdots\)};        
\node[graphnode] (l1b) at (2.0,0.3) {};  
\node[graphnode] (l1c) at (2.4,0.3) {}; 
\draw (c) -- (l1a);          
\draw (l1b) -- (l1c);        

\node[graphnode] (l2a) at (1.3,0.7) {};
\node at (1.65,0.7) {\(\cdots\)};
\node[graphnode] (l2b) at (2.0,0.7) {};
\node[graphnode] (l2c) at (2.4,0.7) {};

\draw (c) -- (l2a);
\draw (l2b) -- (l2c);

\node at (2.4,0.5) {\(\vdots\)};
\draw[decorate,decoration={brace,amplitude=5pt}]
  (2.6,0.9) -- (2.6,0.2);
\node at (3.0, 0.55) {\(\ell\)};

\node[graphnode] (k1) at (1.4,-0.45) {};
\node[graphnode] (k2) at (1.4,-0.83) {};
\draw (c) -- (k1);
\draw (c) -- (k2);
\node at (1.4,-0.65) {\(\vdots\)};
\draw[decorate,decoration={brace,mirror,amplitude=5pt}]
  (1.6,-0.95) -- (1.6,-0.4);
\node at (2.0, -0.7) {\(k\)};

\node at (0,-1.3) {\( H_{1} \)};

\end{tikzpicture}
\caption{Unicyclic graph $H_1$.} 
\label{H1}
\end{figure}  
\end{lemma}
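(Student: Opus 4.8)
The plan is to compute $EUS(H_1)$ by partitioning the edge set $E(H_1)$ according to the unordered pair of endpoint degrees and summing the contribution $\sqrt{d_i^2+d_j^2+d_id_j}$ over each class. Write $D=k+\ell+2$ for the maximum degree, which is realized precisely at the single high-degree vertex $c$ sitting on the cycle. The first step is the structural observation that every vertex other than $c$ has degree $1$ or $2$: the remaining $g-1$ cycle vertices and all interior vertices of the pendant paths have degree $2$, while the far endpoints of the pendant paths are leaves of degree $1$. Consequently only four edge types can occur, namely $(D,2)$, $(D,1)$, $(2,1)$, and $(2,2)$, and the task reduces to counting how many edges fall into each class.

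Next I would enumerate the edges incident to $c$. Since $\deg(c)=D=k+\ell+2$, there are exactly $D$ of them: the two cycle edges at $c$ join $c$ to degree-$2$ vertices; each of the $\ell$ pendant paths of length at least $2$ contributes a first edge joining $c$ to a degree-$2$ vertex; and each of the $k$ length-$1$ pendant paths is a single edge joining $c$ to a leaf. This yields $\ell+2$ edges of type $(D,2)$, each contributing $\sqrt{D^2+2D+4}$, and $k$ edges of type $(D,1)$, each contributing $\sqrt{D^2+D+1}$. I would then count the remaining edges meeting a leaf: these are exactly the terminal edges of the $\ell$ long pendant paths, each joining a degree-$2$ vertex to a leaf, giving $\ell$ edges of type $(2,1)$, each contributing $\sqrt{7}$.

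All edges not yet listed must be of type $(2,2)$, since both endpoints then have degree $2$. Rather than summing the individual pendant-path lengths, I would obtain their number by subtraction, using the fact that a unicyclic graph on $n$ vertices has exactly $n$ edges. The edges of types $(D,2)$, $(D,1)$, and $(2,1)$ number $(\ell+2)+k+\ell=k+2\ell+2$ altogether, so the count of $(2,2)$ edges is $n-(k+2\ell+2)=n-k-2\ell-2$, each contributing $\sqrt{12}$. Collecting the four classes gives the claimed formula. A useful consistency check is that neither the girth $g$ nor the precise lengths of the long pendant paths appear in the answer: varying these merely redistributes degree-$2$ vertices between the cycle and the paths, all of whose non-terminal edges are of type $(2,2)$, so the total is unchanged; the subtraction argument makes this invariance transparent.

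The step requiring the most care is verifying that the four listed types are exhaustive and mutually disjoint, so that no edge is omitted or double-counted. Concretely, one must confirm that $c$ is the unique vertex of degree exceeding $2$, so that no $(D,D)$ edge arises and the cycle introduces no high-degree adjacency beyond its two edges at $c$, and that the hypothesis $g\le n-2$ leaves enough vertices outside the cycle for the prescribed pendant structures to actually exist. Once this bookkeeping is pinned down, the remainder is a direct substitution of $d_i^2+d_j^2+d_id_j$ for each edge type, and I anticipate no genuine analytic difficulty.
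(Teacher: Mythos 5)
Your proof is correct: the edge-type classification (the $\ell+2$ edges of type $(k+\ell+2,2)$, the $k$ edges of type $(k+\ell+2,1)$, the $\ell$ terminal edges of type $(2,1)$, and the remaining $n-k-2\ell-2$ edges of type $(2,2)$ counted via the fact that a unicyclic graph has exactly $n$ edges) yields precisely the stated formula. The paper states this lemma without proof, treating it as a routine computation, and your argument is exactly that computation, so there is nothing substantive to compare.
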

Denote by $C_{n,g}$ the unicyclic graph of order $n$ and girth $g$, constructed by attaching a pendant path $P_{n-g}$ to a single vertex of the cycle $C_g$. In particular, for $g=n$, $G\cong C_n$.  
\begin{corollary}\label{v0} Let $H_1$ be a graph defined in Lemma \ref{s1} (see, Fig.  \ref{H1}). Then
\begin{align}
EUS(H_1)\geq 3\,\sqrt{19}+2\,(n-4)\,\sqrt{3}+\sqrt{7}\label{v1}
\end{align}
with equality if and only if $H_1\cong C_{n,g}$.
\end{corollary}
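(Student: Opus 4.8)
The plan is to treat the explicit formula from Lemma~\ref{s1} as a function of the two integer parameters $k$ and $\ell$ only, and to minimise it over the admissible range. Writing $s=k+\ell+2$ for the maximum degree and
\[
A(s)=\sqrt{s^2+2s+4},\qquad B(s)=\sqrt{s^2+s+1},
\]
Lemma~\ref{s1} reads $EUS(H_1)=(\ell+2)A(s)+\ell\sqrt7+kB(s)+(n-k-2\ell-2)\sqrt{12}$, which depends on $H_1$ only through $(k,\ell)$ (and $n$), not on $g$ nor on how the vertices are distributed among the long pendant paths. The graph $C_{n,g}$ is exactly the case $k=0,\ \ell=1$ (a single pendant path $P_{n-g}$, of length $n-g\ge 2$ since $g\le n-2$, attached to the cycle); substituting $s=3$ gives $3\sqrt{19}+\sqrt7+(n-4)\sqrt{12}=3\sqrt{19}+2(n-4)\sqrt3+\sqrt7$, the claimed right-hand side. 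Since the structural hypothesis $g\le n-2$ forces $k+\ell\ge 1$ (otherwise $H_1$ would be a cycle of order $n$), it suffices to prove that $(k,\ell)=(0,1)$ is the unique minimiser of $f(k,\ell):=EUS(H_1)$ over the lattice domain $\{k,\ell\ge 0:\ k+\ell\ge 1\}$, on which $s\ge 3$.

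First I would eliminate $k$. Fix the value $t=k+\ell$ so that $s=t+2$ is constant; then $A(s),B(s)$ are constants and $f$ restricts to an \emph{affine} function of $k$ along that diagonal, with one-step increment
\[
f(k+1,\ell-1)-f(k,\ell)=\bigl(B(s)-A(s)\bigr)+\bigl(\sqrt{12}-\sqrt7\bigr).
\]
To see this is strictly positive I would rationalise $A(s)-B(s)=\dfrac{s+3}{A(s)+B(s)}$ and use the elementary bounds $A(s)\ge s+1$ and $B(s)\ge s+\tfrac12$, giving $A(s)-B(s)\le \dfrac{s+3}{2s+\frac32}$; a short computation shows $\dfrac{s+3}{2s+\frac32}<\sqrt{12}-\sqrt7$ for all $s\ge 3$ (the left side is decreasing with value $\tfrac45$ at $s=3$, while $\sqrt{12}-\sqrt7>\tfrac45$). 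Hence $f$ is strictly increasing in $k$ along every diagonal, so on each diagonal the minimum is attained at $k=0$, and the global minimum over the domain equals $\min_{\ell\ge 1}f(0,\ell)$.

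It then remains to minimise $\phi(\ell):=f(0,\ell)=(\ell+2)A(\ell+2)+\ell\sqrt7+(n-2\ell-2)\sqrt{12}$ over $\ell\ge 1$. Setting $h(s)=sA(s)=s\sqrt{s^2+2s+4}$ I would write $\phi(\ell+1)-\phi(\ell)=\bigl(h(\ell+3)-h(\ell+2)\bigr)+\sqrt7-2\sqrt{12}$ and bound the increment of $h$ from below: since $\sqrt{(s+1)^2+2(s+1)+4}\ge\sqrt{s^2+2s+4}$ we get $h(s+1)-h(s)\ge (s+1)A(s)-sA(s)=A(s)\ge A(3)=\sqrt{19}$. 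As $\sqrt{19}>2\sqrt{12}-\sqrt7$ (equivalently $\sqrt{19}+\sqrt7>2\sqrt{12}$, i.e.\ $\sqrt{133}>11$, which holds since $133>121$), we conclude $\phi(\ell+1)-\phi(\ell)>0$, so $\phi$ is strictly increasing and its minimum over $\ell\ge 1$ is $\phi(1)$. Combining the two steps, $f$ attains its minimum uniquely at $(k,\ell)=(0,1)$, i.e.\ at $H_1\cong C_{n,g}$, which yields \eqref{v1} together with the stated equality case.

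I expect the main obstacle to be the first (the $k$-)step: the affine increment is only about $0.065$ at $s=3$, so the square-root estimate is genuinely tight, and the bounds $A(s)\ge s+1$, $B(s)\ge s+\tfrac12$ must be chosen sharply enough (coarser bounds fail near $s=3$). The second step, by contrast, has comfortable slack once $\sqrt{133}>11$ is recorded. A minor point worth noting is that the formula's independence of $g$ and of the path-length distribution means many non-isomorphic graphs may share a value $f(k,\ell)$ when $\ell\ge 2$; this does not affect uniqueness, since all such values strictly exceed $f(0,1)$.
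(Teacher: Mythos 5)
Your proof is correct, and while it minimises the same closed-form expression from Lemma~\ref{s1} over the parameters $(k,\ell)$, its internal structure is genuinely different from the paper's. The paper argues by case analysis: it first uses the structural constraint $g\le n-2$ to note that $k+\ell=1$ forces $(k,\ell)=(0,1)$ (the point $(1,0)$ corresponds to $g=n-1$ and is not realizable), which is the equality case; then for $k+\ell\ge 2$ it splits into $\ell=0$ and $\ell\ge 1$ and uses only crude term-by-term bounds such as $A(s)\ge A(4)=\sqrt{28}$, $B(s)\ge B(4)=\sqrt{21}$, $\sqrt{21}>\sqrt{12}$ and $\sqrt{28}+\sqrt{7}>2\sqrt{12}$, all of which have ample slack because $s=k+\ell+2\ge 4$ in those cases. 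Your two-stage lattice monotonicity argument --- strict increase of $f$ under $(k,\ell)\mapsto(k+1,\ell-1)$ along each diagonal, then strict increase of $\phi(\ell)=f(0,\ell)$ in $\ell$ --- is more systematic and proves something slightly stronger: that $(0,1)$ is the unique minimiser over the whole lattice domain $k+\ell\ge 1$, including the non-realizable point $(1,0)$, a comparison the paper never has to make. The price is exactly the obstacle you flagged: on the diagonal $k+\ell=1$ the increment $\bigl(B(3)-A(3)\bigr)+\bigl(\sqrt{12}-\sqrt{7}\bigr)\approx 0.065$ is genuinely tight, so you need the sharp estimates $A(s)\ge s+1$, $B(s)\ge s+\tfrac12$ giving $A(s)-B(s)\le\frac{s+3}{2s+3/2}\le\frac45<\sqrt{12}-\sqrt{7}$, whereas every inequality in the paper's route is loose. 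I checked these estimates and the second step's inequality $\sqrt{19}+\sqrt{7}>2\sqrt{12}$ (i.e.\ $\sqrt{133}>11$); both are valid, and your handling of the equality case (uniqueness of the realizable graph with $(k,\ell)=(0,1)$, namely $C_{n,g}$) is right. If you wished to shorten the argument, you could borrow the paper's structural observation to exclude $(1,0)$ outright; then your diagonal step is only needed for $s\ge 4$, where the margin roughly doubles and coarser bounds suffice.
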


\begin{proof} Since $g\leq n-2$, we have $k+\ell\geq 1$. First we assume that $k+\ell=1$. We must have $k=0$ and $\ell=1$ as $g\leq n-2$. Thus we have $H_1\cong C_{n,g}$ with
   $$EUS(H_1)=3\,\sqrt{19}+2\,(n-4)\,\sqrt{3}+\sqrt{7}$$
and hence the equality holds in (\ref{v1}). 

\vspace*{3mm}

Next we assume that $k+\ell\geq 2$. For $\ell=0$, we obtain
\begin{align*}
EUS(H_1)&=2\,\sqrt{(k+2)^2+2\,(k+2)+4}+k\,\sqrt{(k+2)^2+(k+2)+1}\\
&~~~~~~~~~~~~~~~~~~~~~~+(n-k-2)\,\sqrt{12}\\[2mm]
&\geq 2\,\sqrt{28}+2\,\sqrt{21}+(n-4)\,\sqrt{12}>3\,\sqrt{19}+2\,(n-4)\,\sqrt{3}+\sqrt{7}
\end{align*}
as $k\geq 2$. The inequality in (\ref{v1}) holds strictly. 

\vspace*{3mm}

Otherwise, $\ell\geq 1$. Thus we have
\begin{align*}
EUS(H_1)&=3\,\sqrt{(k+\ell+2)^2+2\,(k+\ell+2)+4}+(n-k-2\,\ell-2)\,\sqrt{12}\\[2mm]
&~~~~~~~+k\,\sqrt{(k+\ell+2)^2+(k+\ell+2)+1}+\sqrt{7}\\[2mm]
&~~~~~~~+(\ell-1)\,\Big[\sqrt{(k+\ell+2)^2+2\,(k+\ell+2)+4}+\sqrt{7}\Big]\\[2mm]
&\geq 3\,\sqrt{28}+\sqrt{7}+k\,\sqrt{21}+(n-k-2\,\ell-2)\,\sqrt{12}+(\ell-1)\\[2mm]
&~~~~~~~~~~~~~~~~\times(\sqrt{28}+\sqrt{7})\\[2mm]
&> 3\,\sqrt{28}+\sqrt{7}+k\,\sqrt{12}+(n-k-2\,\ell-2)\,\sqrt{12}+2\,(\ell-1)\,\sqrt{12}\\[2mm]
&>3\,\sqrt{19}+2\,(n-4)\,\sqrt{3}+\sqrt{7}.
\end{align*}
The inequality in (\ref{v1}) holds strictly. 
\end{proof}

We now establish a lower bound for the $EUS$ index of unicyclic graphs of order $n$ with girth $g$, and identify the extremal graphs that attain this bound.
\begin{theorem}\label{1p1}
Let $G$ be a unicyclic graph of order $n\,(\geq 3)$ with girth $g$. Then 
\begin{equation}\label{m1}
EUS(G)\geq \left\{
                    \begin{array}{ll}
                    2\sqrt{3}\,n & \hbox{for $g=n$,} \\[2mm]
                    2\sqrt{3}\,(n-3)+2\,\sqrt{19}+\sqrt{13} & \hbox{for $g=n-1$,} \\[2mm]
                    3\,\sqrt{19}+2\,(n-4)\,\sqrt{3}+\sqrt{7} & \hbox{for $g\leq n-2$}
                    \end{array}
                \right.
\end{equation}
with equality if and only if $G\cong C_n$ (for $g=n$) or $G\cong C_{n,n-1}$ (for $g=n-1$) or $G\cong C_{n,g}$ (for $g\leq n-2$).
\end{theorem}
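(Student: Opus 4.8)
The plan is to handle the three girth regimes separately, reducing each to a fact already available. When $g=n$ the graph must be $C_n$, and the value $2\sqrt3\,n$ is precisely the cycle bound recalled above. When $g=n-1$ exactly one vertex lies off the cycle, and connectedness together with the unicyclic constraint force $G\cong C_{n,n-1}$ uniquely; I would then simply read off its edge types — the attachment vertex contributes two $(2,3)$-edges and one $(1,3)$-edge, while the remaining $n-3$ cycle edges are of type $(2,2)$ — to obtain $2\sqrt3\,(n-3)+2\sqrt{19}+\sqrt{13}$, with uniqueness automatic. All the real content is therefore the range $g\le n-2$, in which $n-g\ge 2$ vertices hang off the cycle as rooted trees.

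For $g\le n-2$ I would fix a graph $G$ minimizing $EUS$ within the class and show it must be $C_{n,g}$. The single engine is a local move $\tau$, a \emph{path-merge}: choose any two pendant paths of $G$ — attached at one common branch vertex, or at two distinct branch vertices — detach one of them and splice it onto the free end of the other, fusing the two into a single longer pendant path. The claim is that $\tau$ strictly decreases $EUS$, and to prove it I would track exactly which edge weights change. The degree of one branch vertex drops by one, so, since $\sqrt{a^2+b^2+ab}$ is strictly increasing in $a$, every edge remaining incident to that vertex gets a strictly smaller weight; moreover the detached path's former attaching edge, of type $(d,\cdot)$ with $d\ge 3$, is replaced by a $(2,\cdot)$ edge, another strict decrease. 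The only countervailing change, in the generic case, occurs at the splice point, where one former leaf turns a $(1,2)$-edge into a $(2,2)$-edge. Summing these contributions gives a negative net change, the degree-$3$ prototype being the inequality $3\sqrt{19}+\sqrt7-4\sqrt{12}>0$, and I would verify the analogue for every $d\ge 3$.

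Since each application of $\tau$ strictly lowers $EUS$ while reducing the number of pendant paths by one, a minimizer can admit no such move and hence has a single pendant path. A short argument shows its unique branch vertex lies on the cycle — otherwise the cycle would form a separate component, contradicting connectedness — so $G$ is exactly of the form $H_1$ in Lemma~\ref{s1}. Corollary~\ref{v0} then applies directly, identifying $C_{n,g}$ as the unique graph of this shape attaining the minimum and pinning its value at $3\sqrt{19}+2(n-4)\sqrt3+\sqrt7$. Strictness of $\tau$ promotes this to global uniqueness: any $G\not\cong C_{n,g}$ admits an improving move and so cannot be extremal.

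The main obstacle is the uniform verification that $\tau$ decreases $EUS$, because the governing inequalities are genuine near-misses — the degree-$3$ gap $3\sqrt{19}+\sqrt7-4\sqrt{12}$ is small and only becomes positive once the drops across several edges are accumulated. The delicate points are controlling the weight decrease at the branch vertex as its degree $d$ ranges over all values $\ge 3$, and the boundary configurations: a length-one pendant path, the splice when the two branch vertices are adjacent on the cycle, and a branch vertex lying off the cycle. Each of these perturbs the edge bookkeeping slightly, and establishing the required monotonicity in $d$ of the relevant weight differences is the crux of the argument.
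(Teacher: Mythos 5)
Your strategy is correct, but it takes a genuinely different route from the paper's proof in the crucial case $g\le n-2$. The paper splits on the number $q$ of vertices of degree at least $3$: for $q=1$ the graph is $H_1$ and the optimization over $(k,\ell)$ in Corollary~\ref{v0} finishes the job, while for $q\ge 2$ it uses no transformation at all, just a global edge-weight count (pendant-path edges contribute more than $\sqrt{12}$ per edge, and at least three remaining edges have weight at least $\sqrt{19}$, forcing $EUS(G)>3\sqrt{19}+2(n-3)\sqrt{3}$, strictly above the bound). You instead run a local perturbation argument: the path-merge $\tau$ strictly lowers $EUS$, so a minimizer (which exists by finiteness of the class) admits no move, has a single leaf, and is therefore $C_{n,g}$; note that a unicyclic graph with one leaf is automatically the cycle plus one hanging path, so you need Corollary~\ref{v0} only for the value $EUS(C_{n,g})$, and your move subsumes even the paper's $q=1$ case (configurations with $k+\ell\ge 2$ are killed by $\tau$ rather than by the optimization in Corollary~\ref{v0}). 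What your route buys is a reusable structural technique and transparent uniqueness; what it costs is exactly the uniform verification you flag as the crux, which the paper's counting sidesteps entirely. For the record, that verification does go through: writing $f(a,b)=\sqrt{a^2+b^2+ab}$, the detached edge's weight drops by $f(d,x)-f(2,x)\ge \sqrt{19}-\sqrt{12}\approx 0.895$ for all $d\ge 3$, $x\in\{1,2\}$ (monotonicity in $d$), while the splice increase is at most $f(2,2)-f(2,1)=\sqrt{12}-\sqrt{7}\approx 0.818$; it is smaller still, $f(d',2)-f(d',1)$, when the receiving path has length one, and in the coincidence case where both paths hang at the same vertex and the receiving path is a single pendant edge, the splice edge goes from $f(d,1)$ to $f(d-1,2)$, which is a \emph{decrease} since $(d-1)^2+4+2(d-1)=d^2+3<d^2+d+1$ for $d\ge 3$. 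Since $\sqrt{19}+\sqrt{7}>2\sqrt{12}$ (margin about $0.077$ --- the genuinely tight inequality) and every other edge at the detachment vertex strictly decreases, $\tau$ lowers $EUS$ in all configurations. One small repair: your justification that the lone branch vertex must lie on the cycle (``otherwise the cycle would form a separate component'') is not the right reason; the correct observation is that an off-cycle branch vertex would force a second leaf, hence a second pendant path, contradicting that no move is available.
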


\begin{proof} For $g=n$, we have $G\cong C_n$ with $EUS(G)=2\sqrt{3}\,n$ and hence the equality holds in (\ref{m1}). For $g=n-1$, we have $G\cong C_{n,n-1}$ with $EUS(G)=2\sqrt{3}\,(n-3)+2\,\sqrt{19}+\sqrt{13}$ and hence the equality holds in (\ref{m1}). Otherwise, $g\leq n-2$. Let $q$ be the number of vertices of degree $3$ or more. Then we have $q\geq 1$ as $g\leq n-2$. We consider the following two cases:

\vspace*{2mm}

\noindent
${\bf Case\,1}$. $q=1$. In this case there is exactly one vertex of degree $3$ or more, and all other vertices are of degree $2$ or $1$. In this case $G\cong H_1$ (see, Fig.  \ref{H1}). By Corollary \ref{v0}, we obtain
$$EUS(H_1)\geq 3\,\sqrt{19}+2\,(n-4)\,\sqrt{3}+\sqrt{7}$$
with equality if and only if $H_1\cong C_{n,g}$, that is, $G\cong C_{n,g}$.

\vspace*{2mm}

\noindent
${\bf Case\,2}$. $q\geq 2$. For each pendant path $P:\,vw_1w_2\ldots v_k$ of length at least $2$, we have 
$$\sum\limits_{v_iv_j\in E(P)}\,\sqrt{d^2_i+d^2_j+d_id_j}\geq \sqrt{19}+(k-2)\,\sqrt{12}+\sqrt{7}>k\,\sqrt{12},$$
where $d_i$ is the degree of the vertex $v_i\in V(G)$. For each pendant path of length $1$, we have 
$$\sqrt{d^2_i+d^2_j+d_id_j}\geq \sqrt{13}>\sqrt{12}.$$
Let $S$ be the set of edges of all pendant paths in $G$. Using the above results, we obtain
\begin{align}
\sum\limits_{v_iv_j\in S}\,\sqrt{d^2_i+d^2_j+d_id_j}>|S|\,\sqrt{12}.\label{b0}
\end{align}
For any edge $v_iv_j\in E(G)\backslash S$, we have 
 $$\sqrt{d^2_i+d^2_j+d_id_j}\geq \sqrt{12}.$$ 
Let
$$X=\{v_iv_j\in E(G)\backslash S\,|\,d_i\geq 3,\,d_j\geq 2\}.$$
Since $g\leq n-2$ and $q\geq 2$, then there are at least three edges in $X$, that is, $|X|\geq 3$. Thus we have
\begin{align*}
\sum\limits_{v_iv_j\in E(G)\backslash S}\,\sqrt{d^2_i+d^2_j+d_id_j}&=\sum\limits_{v_iv_j\in X}\,\sqrt{d^2_i+d^2_j+d_id_j}\\[2mm]
&~~~~~~~~+\sum\limits_{v_iv_j\in E(G)\backslash (S\cup X)}\,\sqrt{d^2_i+d^2_j+d_id_j}\\[3mm]
&\geq |X|\,\sqrt{19}+(n-|S|-|X|)\,\sqrt{12}\\[2mm]
&\geq 3\,\sqrt{19}+(n-|S|-3)\,\sqrt{12}.
\end{align*}
Using the above result with (\ref{b0}), we obtain
\begin{align*}
EUS(G)&=\sum\limits_{v_iv_j\in E(G)}\,\sqrt{d^2_i+d^2_j+d_id_j}\\[3mm]
&=\sum\limits_{v_iv_j\in E(G)\backslash S}\,\sqrt{d^2_i+d^2_j+d_id_j}+\sum\limits_{v_iv_j\in S}\,\sqrt{d^2_i+d^2_j+d_id_j}\\[3mm]
&>3\,\sqrt{19}+(n-|S|-3)\,\sqrt{12}+|S|\,\sqrt{12}\\[2mm]
&=3\,\sqrt{19}+2\,(n-3)\,\sqrt{3}>3\,\sqrt{19}+2\,(n-4)\,\sqrt{3}+\sqrt{7}. 
\end{align*}
The inequality in (\ref{m1}) holds strictly. This completes the proof of the theorem.
\end{proof}

One can easily see the following result:
\begin{lemma} \label{1r1}Let $G$ be a graph with $v_iv_j\notin E(G)$. Then $EUS(G)<EUS(G+v_iv_j)$.
\end{lemma}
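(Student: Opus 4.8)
The plan is to show that inserting the edge $v_iv_j$ affects $EUS$ in exactly two ways, both of which can only raise the index: it increases the degrees of $v_i$ and $v_j$ by one each (leaving every other degree fixed), and it contributes one brand-new, strictly positive edge-term. The strict positivity of that single new term is what forces the strict inequality, so I will not even need to account precisely for the gains coming from the edges already incident to $v_i$ or $v_j$.

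First I would set $G' = G + v_iv_j$ and write $d_k'$ for the degree of $v_k$ in $G'$, noting that $d_i' = d_i + 1$, $d_j' = d_j + 1$, and $d_k' = d_k$ for every remaining vertex. The one analytic fact I need is that the edge-weight function $f(x,y) = \sqrt{x^2 + y^2 + xy}$ is non-decreasing in each argument on $[0,\infty)^2$, which follows from $\partial f / \partial x = (2x+y)/\bigl(2\sqrt{x^2+y^2+xy}\bigr) \geq 0$ for $x,y \geq 0$, and symmetrically in $y$.

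Next I would partition $E(G') = E(G) \cup \{v_iv_j\}$. For each surviving edge $v_av_b \in E(G)$, inserting an edge never decreases a degree, so $d_a' \geq d_a$ and $d_b' \geq d_b$; monotonicity of $f$ then gives $f(d_a', d_b') \geq f(d_a, d_b)$. Summing over $E(G)$ shows that the old edges contribute at least as much in $G'$ as in $G$. The new edge contributes $f(d_i', d_j') = \sqrt{(d_i+1)^2 + (d_j+1)^2 + (d_i+1)(d_j+1)} > 0$, so combining the two pieces yields $EUS(G') \geq EUS(G) + f(d_i', d_j') > EUS(G)$, as claimed.

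The argument is essentially routine once the monotonicity of $f$ is recorded, so I do not expect a genuine obstacle. The only point deserving a moment's care is the observation that no degree decreases under edge insertion, which is what lets every surviving edge-term be compared against a term with weakly larger arguments; the strict inequality is then secured entirely by the positive weight of the single added edge.
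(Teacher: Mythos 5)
Your proof is correct, and it matches the approach the paper intends: the paper states this lemma without proof (prefacing it only with ``One can easily see the following result''), and the routine argument it has in mind is precisely yours --- monotonicity of $f(x,y)=\sqrt{x^2+y^2+xy}$ in each argument guarantees no surviving edge-term decreases, and the strictly positive weight of the newly inserted edge forces the strict inequality. Nothing further is needed.
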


\begin{theorem}
Let $G$ be a graph of order $n\,(\geq 3)$ with girth $g$. Then 
\begin{equation}\label{m11}
EUS(G)\geq \left\{
                    \begin{array}{ll}
                    2\sqrt{3}\,n & \hbox{for $g=n$,} \\[2mm]
                    2\sqrt{3}\,(n-3)+2\,\sqrt{19}+\sqrt{13} & \hbox{for $g=n-1$,} \\[2mm]
                    3\,\sqrt{19}+2\,(n-4)\,\sqrt{3}+\sqrt{7} & \hbox{for $g\leq n-2$}
                    \end{array}
                \right.
\end{equation}
with equality if and only if $G\cong C_n$ (for $g=n$) or $G\cong C_{n,n-1}$ (for $g=n-1$) or $G\cong C_{n,g}$ (for $g\leq n-2$).
\end{theorem}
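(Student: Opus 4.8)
The plan is to reduce the general connected case to the unicyclic case already settled in Theorem \ref{1p1}, using the monotonicity recorded in Lemma \ref{1r1}. Since $G$ is connected and has girth $g$, it contains a cycle, so $m=|E(G)|\geq n$. If $m=n$, then $G$ is unicyclic of girth $g$, and both the bound and the equality characterization follow immediately from Theorem \ref{1p1}. Hence the entire content lies in the case $m>n$, where I must show that the inequality in (\ref{m11}) holds \emph{strictly}, so that no such graph can be extremal.

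When $m>n$, the strategy is to exhibit a spanning unicyclic subgraph $G'$ of $G$ that still has girth $g$, and then invoke Lemma \ref{1r1}. Concretely, I would fix a shortest cycle $C$ of $G$, which has length $g$, keep every edge of $C$, and attach the remaining $n-g$ vertices by a forest. Formally, contract $C$ to a single vertex to obtain a connected (multi)graph $G^{*}$ on $n-g+1$ vertices, choose a spanning tree $T^{*}$ of $G^{*}$, and let $G'$ be the union of $C$ with the $n-g$ edges of $G$ corresponding to $T^{*}$ (equivalently, grow the subgraph from $C$ by repeatedly adding an edge of $G$ that reaches a new vertex). Then $G'$ is connected, has $g+(n-g)=n$ edges, hence is unicyclic, and since $G'/C=T^{*}$ is a tree its unique cycle is exactly $C$, so its girth is $g$. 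Because $m>n$, the subgraph $G'$ is obtained from $G$ by deleting at least one edge, so applying Lemma \ref{1r1} repeatedly gives $EUS(G')<EUS(G)$.

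It then remains to combine the two facts. Since $G'$ is unicyclic of girth $g$, Theorem \ref{1p1} yields $EUS(G')\geq R_g$, where $R_g$ denotes the relevant right-hand side of (\ref{m11}) for the three ranges $g=n$, $g=n-1$, and $g\leq n-2$. Chaining the inequalities gives $EUS(G)>EUS(G')\geq R_g$, so the bound in (\ref{m11}) holds and is strict whenever $m>n$. Consequently equality in (\ref{m11}) forces $m=n$, that is, $G$ is unicyclic, and the characterization of the extremal graphs is inherited verbatim from Theorem \ref{1p1}: $G\cong C_n$ for $g=n$, $G\cong C_{n,n-1}$ for $g=n-1$, and $G\cong C_{n,g}$ for $g\leq n-2$.

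The step I expect to require the most care is the construction of the girth-preserving unicyclic subgraph $G'$. The subtlety is that it does not suffice to delete down to \emph{some} unicyclic spanning subgraph: an arbitrary choice could destroy every shortest cycle and leave a unique cycle longer than $g$, which would place $G'$ in the wrong branch of Theorem \ref{1p1} and break the comparison. Anchoring the construction on a fixed shortest cycle $C$ and trimming only edges outside it—equivalently, forming the spanning tree after contracting $C$—is precisely what guarantees that $G'$ has girth exactly $g$, and I would state this explicitly. Everything else is a direct appeal to the two results already in hand.
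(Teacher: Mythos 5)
Your proposal is correct and follows essentially the same route as the paper: both arguments reduce to the unicyclic case by deleting edges to reach a spanning unicyclic subgraph of the same girth $g$, then combine Lemma~\ref{1r1} with Theorem~\ref{1p1}, with equality in (\ref{m11}) forcing $m=n$. The only difference is that where the paper simply asserts at each step that some edge can be removed while preserving connectivity and girth, you construct the girth-preserving unicyclic spanning subgraph explicitly (anchoring on a fixed shortest cycle and taking a spanning tree of the contraction), which supplies the justification the paper leaves implicit.
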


\begin{proof} Let $m$ be the number of edges in the graph $G$. If $m \geq n + 1$, then there exists an edge $e \in E(G)$ such that removing $e$ results in a connected graph $G - e$ with the same girth $g$. We can iteratively apply this process--removing an appropriate edge while maintaining connectivity and girth--until we obtain a graph $H$ of order $n$ with exactly $n$ edges and girth $g$. Such a graph $H$ is necessarily unicyclic. By Lemma \ref{1r1}, we then have:
$$EUS(G) \geq EUS(H)$$
with equality if and only if $G \cong H$. 

\vspace*{3mm}

The above result with Theorem \ref{1p1}, we get the result in (\ref{m11}). 
Moreover, the equality holds if and only if $G\cong C_n$ (for $g=n$) or $G\cong C_{n,n-1}$ (for $g=n-1$) or $G\cong C_{n,g}$ (for $g\leq n-2$).
\end{proof}

\begin{lemma}{\rm \cite{KD1,KD2}}\label{w1} If $f(x)$ is a convex function with $a,\,b\geq 0$, then $f(x)-f(x-a)\geq f(x-b)-f(x-b-a)$ with equality if and only if $a$ and $b$ are both zero or one of them is zero.
\end{lemma}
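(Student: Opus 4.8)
The plan is to recognize this as the standard \emph{increasing-increments} (equivalently, majorization) property of a convex function, and to prove it by a direct appeal to the definition of convexity rather than through slopes or derivatives, so that no differentiability of $f$ is required.

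First I would rewrite the claimed inequality $f(x)-f(x-a)\ge f(x-b)-f(x-b-a)$ in the symmetric form
$$f(x)+f(x-a-b)\ \ge\ f(x-a)+f(x-b),$$
which isolates the four arguments $x,\ x-a,\ x-b,\ x-a-b$. The key observation is that, because $a,b\ge 0$, the two ``inner'' points $x-a$ and $x-b$ both lie in the interval $[x-a-b,\ x]$ spanned by the two ``outer'' points $x$ and $x-a-b$, while the pairwise sums coincide: $x+(x-a-b)=(x-a)+(x-b)$. This is exactly the configuration in which convexity forces the value of $f$ on the outer pair to dominate its value on the inner pair.

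Next I would make this precise by expressing each inner point as an explicit convex combination of the outer points. For $a+b>0$ set $\lambda=\tfrac{a}{a+b}\in[0,1]$; then $x-a=\lambda\,(x-a-b)+(1-\lambda)\,x$ and $x-b=(1-\lambda)\,(x-a-b)+\lambda\,x$. Applying the convexity inequality to each and adding, the weights combine to $1$ on each of $f(x-a-b)$ and $f(x)$, giving $f(x-a)+f(x-b)\le f(x-a-b)+f(x)$, which is the desired bound; when $a+b=0$ both sides vanish and equality is immediate.

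The step requiring the most care is the equality clause, and here I expect the decisive ingredient to be \emph{strict} convexity of the functions actually used in the paper. When $a,b>0$ the weight $\lambda$ lies strictly in $(0,1)$, so each of the two convexity inequalities is strict and hence so is their sum; conversely, if $a=0$ or $b=0$ then one inner point coincides with an outer point, the combination is trivial, and equality holds. This yields equality precisely when $a=0$ or $b=0$ (including the case where both vanish), matching the statement. The only subtlety I would flag explicitly is that this exact equality characterization relies on strict convexity, so I would either state the lemma for strictly convex $f$ or remark that the $EUS$ summand functions to which it is later applied are strictly convex on the relevant domain.
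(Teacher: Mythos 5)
Your argument is correct, and the first thing to note is that the paper itself contains no proof of Lemma \ref{w1}: the result is quoted from \cite{KD1,KD2}, so your write-up supplies a proof where the paper gives only a citation. Your route is the standard elementary one and it works: passing to the symmetric form $f(x)+f(x-a-b)\geq f(x-a)+f(x-b)$ and writing each inner point as a convex combination of the outer points, namely $x-a=\lambda\,(x-a-b)+(1-\lambda)\,x$ and $x-b=(1-\lambda)\,(x-a-b)+\lambda\,x$ with $\lambda=a/(a+b)$, then adding the two convexity inequalities, requires no differentiability of $f$ at all; a derivative-based proof (writing both differences as integrals of $f'$ and using monotonicity of $f'$) would be less general. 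Your caveat about the equality clause is also genuinely important and correctly diagnosed: as the lemma is stated, the ``only if'' direction fails for merely convex $f$ --- any affine $f$ yields equality for all $a,\,b\geq 0$ --- so the stated characterization of equality really does require strict convexity (or at least that $f$ is not affine on $[x-a-b,\,x]$). This imprecision is harmless for the paper, because in both places where the lemma is invoked (Claims \ref{c2} and \ref{c3}) the functions in question satisfy $f''(x)>0$ and $g''(x)>0$, hence are strictly convex, and the lemma is applied with $a=1>0$ and $b=a_1+1-a_i>0$ precisely to conclude the strict inequalities $f(a_1+1)+f(a_i-1)>f(a_1)+f(a_i)$ and the analogous one for $g$; your strict-convexity version of the equality clause is exactly what those applications need.
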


Let $K_{n,p}$ denote the graph of order $n$ with $p$ pendant vertices, constructed by attaching $p$ pendant vertices to a single vertex of the complete graph $K_{n-p}$. In particular, when $p = 0$, we have $K_{n,p} \cong K_n$; and when $p = n-1$, $K_{n,p} \cong S_n$, where $S_n$ denotes the star graph of order $n$.

Let $n$ and $p$ be integers with $n>p$, and let $a_1,\,a_2,\,\ldots,\,a_{n-p}$ be non-negative integers satisfying
$$\sum_{i=1}^{n-p}\,a_i=p.$$
We define the graph
$$S(a_1,\,a_2,\,\ldots,\,a_{n-p})$$
as the graph obtained from the complete graph $K_{n-p}$ by attaching $a_i$ pendant (degree-one) vertices to the $i$-th vertex of $K_{n-p}$, for each $i = 1,\,2,\,\ldots,\,n-p$. In particular, for $p=0$, we have $a_1=a_2=\cdots=a_{n-p}=0$ and $S(a_1,\,a_2,\,\ldots,\,a_{n-p})\cong K_n$. For $a_1=p$, $a_2=a_3=\cdots=a_{n-p}=0$, $S(a_1,\,a_2,\,\ldots,\,a_{n-p})\cong K_{n,p}$. If $n-p=1$, then $S(a_1,\,a_2,\,\ldots,\,a_{n-p})\cong S_n$, where $S_n$ is a star graph of order $n$. So we assume that $n-p\geq 2$. This class of graphs is often used in extremal graph theory and in the study of degree-based topological indices. We now give an upper bound on $EUS$ index for a class of graphs of order $n$ with $p$ pendant vertices, and characterize the extremal graphs.
\begin{theorem}
Let $G$ be a graph of order $n$ with $p$ pendant vertices. Then 
\begin{align}
EUS(G)&\leq \sqrt{3}\,{n-p-1 \choose 2}\,(n-p-1)+p\,\sqrt{n^2-n+1}\nonumber\\[2mm]
&~~~~~~+(n-p-1)\,\sqrt{(n-1)\,(2n-p-2)+(n-p-1)^2}\label{1kin1}
\end{align}
with equality if and only if $G\cong K_{n,p}$.
\end{theorem}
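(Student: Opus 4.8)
The plan is to solve the problem in two stages: first reduce an arbitrary graph with the prescribed parameters to the family $S(a_1,\ldots,a_{n-p})$ by edge additions, and then show that within this family the Euler Sombor index is maximised exactly when all pendant vertices are attached to a single vertex of the clique, namely by $K_{n,p}$.

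For the reduction I would argue that an extremal graph lies in the family $S(a_1,\ldots,a_{n-p})$. We may assume $G$ is connected, since joining two components through non-pendant vertices strictly increases $EUS$ by Lemma \ref{1r1} without changing $p$. The $n-p$ vertices of degree at least $2$ can then be made pairwise adjacent: inserting any missing edge between two such vertices keeps both non-pendant and, by Lemma \ref{1r1}, strictly increases $EUS$. Hence these vertices form a clique $K_{n-p}$, and since $G$ is connected with $n\geq 3$ every pendant vertex is adjacent to a clique vertex (a pendant adjacent to a pendant would form an isolated $K_2$). Thus the augmented graph is exactly $S(a_1,\ldots,a_{n-p})$ for some $a_i\geq 0$ with $\sum a_i=p$. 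Writing $c=n-p-1$ and $d_i=c+a_i$ for the degree of the $i$-th clique vertex, one has
$$EUS\bigl(S(a_1,\ldots,a_{n-p})\bigr)=\sum_{i=1}^{n-p} a_i\sqrt{d_i^2+d_i+1}+\sum_{1\leq i<j\leq n-p}\sqrt{d_i^2+d_j^2+d_id_j},$$
so it remains to maximise this over all admissible distributions.

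The core of the argument is a shifting step. If at least two clique vertices carry pendants, let $w_i$ have maximum degree and let $w_j$ be any other vertex with $a_j\geq 1$, so that $d_i\geq d_j>c$. I would move one pendant from $w_j$ to $w_i$, i.e.\ replace $(d_i,d_j)$ by $(d_i+1,d_j-1)$, and show $EUS$ strictly increases. The change splits into three groups. For the pendant edges the net change is $\bigl[F(d_i+1)-F(d_i)\bigr]-\bigl[F(d_j)-F(d_j-1)\bigr]$ with $F(d)=(d-c)\sqrt{d^2+d+1}$; since $F$ is convex on $[c,\infty)$ (as $d-c\geq 0$ and $\sqrt{d^2+d+1}$ is convex) and $d_i\geq d_j-1$, this is nonnegative by Lemma \ref{w1}. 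For each clique edge joining $\{w_i,w_j\}$ to a third vertex $w_k$, the change is $\bigl[h(d_i+1)-h(d_i)\bigr]-\bigl[h(d_j)-h(d_j-1)\bigr]$ with $h(x)=\sqrt{x^2+xd_k+d_k^2}$, again nonnegative by convexity of $h$ and Lemma \ref{w1}. Finally, the single edge $w_iw_j$ changes by $\sqrt{(d_i+1)^2+(d_i+1)(d_j-1)+(d_j-1)^2}-\sqrt{d_i^2+d_id_j+d_j^2}$, which is strictly positive because the sum $d_i+d_j$ is preserved while the product term satisfies $(d_i+1)(d_j-1)<d_id_j$. Summing the three groups, the shift strictly increases $EUS$.

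Iterating the shift concentrates all pendants on one clique vertex, so the unique maximiser in the family is $S(p,0,\ldots,0)=K_{n,p}$, whose index is the right-hand side of (\ref{1kin1}); tracing the strict inequalities back through the edge-addition reduction yields equality if and only if $G\cong K_{n,p}$. The step I expect to be the main obstacle is the shifting inequality: one must verify that the gains from the pendant edges and from the single edge $w_iw_j$ are never outweighed by the $n-p-2$ clique edges whose second endpoint degree drops from $d_j$ to $d_j-1$. The convexity of $F$ and of each $h$, packaged through Lemma \ref{w1}, is precisely what forces every group of terms to move in the favourable direction at once.
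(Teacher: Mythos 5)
Your proposal is correct and follows essentially the same route as the paper: reduction to the family $S(a_1,\ldots,a_{n-p})$ via Lemma \ref{1r1}, followed by a pendant-shifting argument that splits the change in $EUS$ into the same three edge groups (the two affected pendant bundles, the edge $w_iw_j$ itself, and the clique edges to third vertices) and handles each with convexity together with Lemma \ref{w1}. Your only departure is cosmetic: absorbing the pendant multiplicities into the single convex function $F(d)=(d-c)\sqrt{d^2+d+1}$ streamlines the paper's Claim \ref{c2}, which instead treats the coefficients $a_1$ and $a_i$ through a separate two-step estimate.
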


\begin{proof} Since $G$ has $n$ vertices with $p$ pendant vertices, by Lemma \ref{1r1}, one can easily see that 
\begin{align}
EUS(G)\leq EUS(S(a_1,\,a_2,\ldots,\,a_{n-p}))\label{1t0}
\end{align}
with equality if and only if $G\cong S(a_1,\,a_2,\ldots,\,a_{n-p})$, where $a_1,\,a_2,\ldots,\,a_{n-p}$ are non-negative integers such that $a_1+a_2+\cdots+a_{n-p}=p$.
Without loss of generality, we can assume that $a_1=\max_{1\leq k\leq n-p}\,a_k$, that is, $a_1\geq a_k$ for any $1\leq k\leq n-p$. First suppose that $a_1=p,\,a_2=\ldots=a_{n-p}=0$, then $S(a_1,\,a_2,\ldots,\,a_{n-p})\cong K_{n,p}$, and hence 
\begin{align*}
&EUS(S(a_1,\,a_2,\ldots,\,a_{n-p}))=\sqrt{3}\,{n-p-1 \choose 2}\,(n-p-1)+p\,\sqrt{n^2-n+1}\\[2mm]
&~~~~~~~~~~~~~~~~~~+(n-p-1)\,\sqrt{(n-1)\,(2n-p-2)+(n-p-1)^2}.
\end{align*}
This result with (\ref{1t0}), we get the result in (\ref{1kin1}). Moreover, the equality holds in (\ref{1kin1}) if and only if $G\cong K_{n,p}$. 

Next suppose that $a_1<p$.
Let $H\cong S(a_1,\,a_2,\ldots,\,a_{n-p})$. Also let $H'\cong S(a_1+1,\,a_2,\ldots,\,a_{i-1},\,a_i-1,\,a_{i+1},\ldots,\,a_{n-p})$, where $a_i\geq 1$. Now,
{\footnotesize\begin{align}
&EUS(H')-EUS(H)\nonumber\\[2mm]
&=\sqrt{(n-p+a_1)^2+(n-p-2+a_i)^2+(n-p+a_1)\,(n-p-2+a_i)}\nonumber\\[3mm]
&-\sqrt{(n-p-1+a_1)^2+(n-p-1+a_i)^2+(n-p-1+a_1)\,(n-p-1+a_i)}\nonumber\\[3mm]
&+(a_1+1)\,\sqrt{(n-p+a_1)^2+n-p+a_1+1}-a_1\,\sqrt{(n-p-1+a_1)^2+n-p+a_1}\nonumber\\[3mm]
&+(a_i-1)\sqrt{(n-p-2+a_i)^2+n-p-1+a_i}-a_i\sqrt{(n-p-1+a_i)^2+n-p+a_i}\nonumber\\[3mm]
&+\sum\limits^{n-p}_{k=2,k\neq i}\,\Bigg[\sqrt{(n-p+a_1)^2+(n-p-1+a_k)^2+(n-p+a_1)\,(n-p-1+a_k)}\nonumber\\[3mm]
&+\sqrt{(n-p-2+a_i)^2+(n-p-1+a_k)^2+(n-p-2+a_i)\,(n-p-1+a_k)}\nonumber\\[3mm]
&-\sqrt{(n-p-1+a_1)^2+(n-p-1+a_k)^2+(n-p-1+a_1)\,(n-p-1+a_k)}\nonumber\\[3mm]
&-\sqrt{(n-p-1+a_i)^2+(n-p-1+a_k)^2+(n-p-1+a_i)\,(n-p-1+a_k)}\Bigg].\label{1t1}
\end{align}}

\vspace*{3mm}

\begin{claim}\label{c1} 
{\footnotesize\begin{align*}
&\sqrt{(n-p+a_1)^2+(n-p-2+a_i)^2+(n-p+a_1)\,(n-p-2+a_i)}\nonumber\\[3mm]
&~~~~>\sqrt{(n-p-1+a_1)^2+(n-p-1+a_i)^2+(n-p-1+a_1)\,(n-p-1+a_i)}.
\end{align*}}
\end{claim}

\vspace*{3mm}

\noindent
{\bf Proof of Claim \ref{c1}}. Since $a_1\geq a_i$, we obtain
{\footnotesize\begin{align*}
&(n-p+a_1)^2+(n-p-2+a_i)^2+(n-p+a_1)\,(n-p-2+a_i)\\[2mm]
=&(n-p-1+a_1)^2+2(n-p-1+a_1)+2+(n-p-1+a_i)^2-2(n-p-1+a_i)\\[2mm]
&+(n-p+a_1-1)\,(n-p-1+a_i)-(n-p+a_1-1)+(n-p-1+a_i)-1\\[2mm]
>&(n-p-1+a_1)^2+(n-p-1+a_i)^2+(n-p-1+a_1)\,(n-p-1+a_i).
\end{align*}}
From the above, we prove {\bf Claim \ref{c1}}.

\vspace*{3mm}

\begin{claim}\label{c2} 
{\footnotesize\begin{align*}
&(a_1+1)\,\sqrt{(n-p+a_1)^2+n-p+a_1+1}-a_1\,\sqrt{(n-p-1+a_1)^2+n-p+a_1}\\[3mm]
&+(a_i-1)\,\sqrt{(n-p-2+a_i)^2+n-p-1+a_i}-a_i\,\sqrt{(n-p-1+a_i)^2+n-p+a_i}\\[2mm]
&>0.
\end{align*}}
\end{claim}

\vspace*{3mm}

\noindent
{\bf Proof of Claim \ref{c2}}. Let us consider a function
{\footnotesize\begin{align}
f(x)&=\sqrt{(n-p-1+x)^2+n-p+x}=\sqrt{(n-p+x-1/2)^2+3/4}\nonumber\\[3mm]
&=\sqrt{(x+t)^2+3/4},\nonumber
\end{align}
where $t=n-p-1/2>0$. Then we obtain
$$f'(x)=\frac{x+t}{\sqrt{(x+t)^2+3/4}}>0~~\mbox{ and }~~f''(x)=\frac{3/4}{\Big((x+t)^2+3/4\Big)^{3/2}}>0.$$}
Thus $f(x)$ is an increasing and convex function. Setting $x=a_1+1,$ $a=1,\,b=a_1+1-a_i>0$ in Lemma \ref{w1}, we obtain
   {\footnotesize $$f(a_1+1)+f(a_i-1)>f(a_1)+f(a_i),~\mbox{ that is, }~f(a_1+1)+f(a_i-1)- f(a_1)-f(a_i)>0.$$}
Thus we have
{\footnotesize\begin{align*}
&\sqrt{(n-p+a_1)^2+n-p+a_1+1}+\sqrt{(n-p-2+a_i)^2+n-p-1+a_i}\\[2mm]
&-\sqrt{(n-p-1+a_1)^2+n-p+a_1}-\sqrt{(n-p-1+a_i)^2+n-p+a_i}\\[2mm]
&>0.
\end{align*}}
Since $a_1\geq a_i$, using the above result, we obtain
{\footnotesize\begin{align*}
&(a_1+1)\,\sqrt{(n-p+a_1)^2+n-p+a_1+1}-a_1\,\sqrt{(n-p-1+a_1)^2+n-p+a_1}\\[3mm]
&+(a_i-1)\sqrt{(n-p-2+a_i)^2+n-p-1+a_i}-a_i\sqrt{(n-p-1+a_i)^2+n-p+a_i}\\[3mm]
=&a_1\,\Big[\sqrt{(n-p+a_1)^2+n-p+a_1+1}-\sqrt{(n-p-1+a_1)^2+n-p+a_1}\,\Big]\\[3mm]
&~~~~+a_i\,\Big[\sqrt{(n-p-2+a_i)^2+n-p-1+a_i}-\sqrt{(n-p-1+a_i)^2+n-p+a_i}\,\Big]\\[3mm]
&~~~~+\sqrt{(n-p+a_1)^2+n-p+a_1+1}-\sqrt{(n-p-2+a_i)^2+n-p-1+a_i}\\[3mm]
> &a_i\,\Big[\sqrt{(n-p+a_1)^2+n-p+a_1+1}-\sqrt{(n-p-1+a_1)^2+n-p+a_1}\Big]\\[3mm]
&~~+a_i\,\Big[\sqrt{(n-p-2+a_i)^2+n-p-1+a_i}-\sqrt{(n-p-1+a_i)^2+n-p+a_i}\Big]\\[3mm]
=&a_i\,\Big[\sqrt{(n-p+a_1)^2+n-p+a_1+1}-\sqrt{(n-p-1+a_1)^2+n-p+a_1}\\[3mm]
&~~+\sqrt{(n-p-2+a_i)^2+n-p-1+a_i}-\sqrt{(n-p-1+a_i)^2+n-p+a_i}\Big]\\[3mm]
>&0.
\end{align*}}
This proves {\bf Claim \ref{c2}}.

\vspace*{3mm}

\begin{claim}\label{c3} 
{\footnotesize\begin{align*}
&\sum\limits^{n-p}_{k=2,\,k\neq i}\,\Bigg[\sqrt{(n-p+a_1)^2+(n-p-1+a_k)^2+(n-p+a_1)\,(n-p-1+a_k)}\\[3mm]
&+\sqrt{(n-p-2+a_i)^2+(n-p-1+a_k)^2+(n-p-2+a_i)\,(n-p-1+a_k)}\\[3mm]
&-\sqrt{(n-p-1+a_1)^2+(n-p-1+a_k)^2+(n-p-1+a_1)\,(n-p-1+a_k)}\\[2mm]
&-\sqrt{(n-p-1+a_i)^2+(n-p-1+a_k)^2+(n-p-1+a_i)\,(n-p-1+a_k)}\Bigg]\\[2mm]
&>0.
\end{align*}}
\end{claim}

\vspace*{3mm}

\noindent
{\bf Proof of Claim \ref{c3}}. Let us consider a function
{\footnotesize\begin{align*}
g(x)&=\sqrt{(n-p-1+x)^2+(n-p-1+a_k)^2+(n-p-1+x)\,(n-p-1+a_k)},\\[3mm]
&=\sqrt{(x+s_1)^2+s_2^2+(x+s_1)\,s_2}\,,
\end{align*}}
where $s_1=n-p-1$ and $s_2=n-p-1+a_k>0$. Then we obtain
\begin{align*}
&~~g'(x)=\frac{x+s_1+s_2/2}{\sqrt{(x+s_1)^2+s^2_2+(x+s_1)\,s_2}}>0,\\
\mbox{ and }&\\
&~~g''(x)=\frac{0.75\,s^2_2}{\Big((x+s_1)^2+s^2_2+(x+s_1)\,s_2\Big)^{3/2}}>0.
\end{align*}
Thus $g(x)$ is an increasing and convex function. Setting $x=a_1+1$, $a=1,\,b=a_1+1-a_i>0$ in Lemma \ref{w1}, we obtain
    {\footnotesize $$g(a_1+1)+g(a_i-1)>g(a_1)+g(a_i),~\mbox{ that is, }~g(a_1+1)+g(a_i-1)-g(a_1)-g(a_i)>0.$$}
Thus we have
{\footnotesize\begin{align*}
&\sqrt{(n-p+a_1)^2+(n-p-1+a_k)^2+(n-p+a_1)\,(n-p-1+a_k)}\\[2mm]
&+\sqrt{(n-p-2+a_i)^2+(n-p-1+a_k)^2+(n-p-2+a_i)\,(n-p-1+a_k)}\\[2mm]
&-\sqrt{(n-p-1+a_1)^2+(n-p-1+a_k)^2+(n-p-1+a_1)\,(n-p-1+a_k)}\\[2mm]
&-\sqrt{(n-p-1+a_i)^2+(n-p-1+a_k)^2+(n-p-1+a_i)\,(n-p-1+a_k)}>0,
\end{align*}}
that is,
{\footnotesize\begin{align*}
&\sum\limits^{n-p}_{k=2,k\neq i}\,\Bigg[\sqrt{(n-p+a_1)^2+(n-p-1+a_k)^2+(n-p+a_1)\,(n-p-1+a_k)}\\[3mm]
&+\sqrt{(n-p-2+a_i)^2+(n-p-1+a_k)^2+(n-p-2+a_i)\,(n-p-1+a_k)}\\[2mm]
&-\sqrt{(n-p-1+a_1)^2+(n-p-1+a_k)^2+(n-p-1+a_1)\,(n-p-1+a_k)}\\[2mm]
&-\sqrt{(n-p-1+a_i)^2+(n-p-1+a_k)^2+(n-p-1+a_i)\,(n-p-1+a_k)}\Bigg]\\[2mm]
&>0.
\end{align*}}
This proves {\bf Claim \ref{c3}}.

\vspace*{3mm}

Using Claims \ref{c1}, \ref{c2} and \ref{c3} in (\ref{1t1}), we obtain $EUS(H')-EUS(H)>0$, that is, $EUS(H')>EUS(H)$. Using the same transformation several times (if exists), we obtain
$$EUS(H)<EUS(H')<\cdots<EUS(S(p,\,0,\ldots,\,0))=EUS(K_{n,p}).$$
The above result with (\ref{1t0}), we obtain $EUS(G)\leq EUS(S(a_1,\,a_2,\ldots,\,a_{n-p}))$\\
$=EUS(H)<EUS(K_{n,p})$. This completes the proof of the theorem.
\end{proof}

\section{Concluding Remarks}
In this paper, we have identified the unicyclic graphs with fixed order and prescribed girth that minimize the $EUS$ index. Building on this, we extended our results to encompass all connected graphs under the same conditions, determining those that achieve the lowest index values. Moreover, we provided a characterization of connected graphs with a fixed order and specified number of pendent vertices that maximize the $EUS$ index. Moreover, we observed that, among all chemical unicyclic graphs, the cycle graph has the minimum value for the $EUS$ index. These problems were previously posed as open problems in~\cite{khanra25euler}.

However, some key problems remain open: determining the maximum $EUS$ index and characterizing the extremal graphs among connected graphs with fixed order and given girth; finding the minimum $EUS$ index and identifying the extremal graphs when both the order and number of pendent vertices are specified; and characterizing the maximal chemical unicyclic graphs for the $EUS$ index.

\vspace*{3mm}

\noindent

\vspace*{4mm}

\baselineskip=0.2in

\end{document}